\def\ZZ{{\mathbb Z}}
\def\cG{{\cal G}}
\def\cT{{\cal T}}
\def\cV{{\cal V}}
\def\cK{{\cal K}}
\def\Ext{{\hbox{\rm{Ext}}}}
\def\Ass{{\hbox{\rm {Ass}}}}
\def\Hom{{\hbox{\rm{Hom}}}}
\def\Dim{{\hbox{\rm{dim}}}}
\def\Im{{\hbox{\rm{Im}}}}
\def\Ann{{\hbox{\rm{Ann}}}}
\def\InjDim{{\hbox{\rm{inj.dim}}}}
\def\Supp{{\hbox{\rm{Supp}}}}
\def\Spec{{\hbox{\rm{Spec}}}}
\def\Length{{\hbox{\rm{length}}}}
\newtheorem{Def}{Definition}[section]
\newtheorem{Lemma}{Lemma}[section]
\newtheorem{Prop}{Proposition}[section]
\newtheorem{Cor}{Corollary}[section]
\newtheorem{Teo}{Theorem}[section]
\begin{document}

\title{Local cohomology properties of direct summands}

\author{Luis N\'u\~nez-Betancourt}
\maketitle

\begin{abstract}
In this article, we prove that
if $R\to S$ is a homomorphism of Noetherian rings that splits, then for every $i\geq 0$ and ideal $I\subset R$, $\Ass_R H^i_I(R)$ is finite when 
$\Ass_S H^i_{IS}(S)$ is finite.
In addition,  if $S$ is a Cohen-Macaulay ring that is finitely generated as an $R$-module, such that
all the Bass numbers of $H^i_{IS}(S)$, as an $S$-module, are finite, then all the Bass numbers of $H^i_{I}(R)$, as  an $R$-module, are finite. Moreover, we show these results for a larger class a functors introduced by Lyubeznik \cite{LyuDMod}. As a consequence, we exhibit a Gorenstein
$F$-regular UFD of positive characteristic that is not a direct summand, not even a pure subring, of any regular ring.
\end{abstract}

\section{Introduction}

Throughout this article all rings are commutative Noetherian with unity. Let $R$ denote a ring. 
If $M$ is an $R$-module and $I\subset R$ is an ideal, we denote the $i$-th local
cohomology of $M$ with support in $I$ by $H^i_I (M)$. If $I$ is generated by the elements $f_1,\ldots, f_\ell \in R$, these cohomology
 groups can be computed by the $\check{\mbox{C}}$ech complex,

$$
0\to M\to \oplus_j M_{f_j}\to \ldots \to M_{f_1 \cdots f_\ell} \to 0.
$$

The structure of these modules has been widely studied 
by several authors. Among the results obtained, one encounters the following finiteness properties for certain 
regular rings:

\begin{itemize}
\item[(1)] \quad the set of associated primes of $H^i_I(R)$ is finite;
\item[(2)] \quad the Bass numbers of $H^i_I(R)$ are finite;
\item[(3)] \quad $\InjDim H^i_I (R)\leq \Dim\Supp H^i_I(R)$.
\end{itemize}

Huneke and Sharp proved those properties for characteristic $p>0$  \cite{Huneke}. Lyubeznik showed  them
for regular local rings of equal characteristic zero  and finitely generated regular algebras over a field of characteristic zero
 \cite{LyuDMod}.

These properties have been proved for a larger family of functors introduced by Lyubeznik  \cite{LyuDMod}.
If $Z \subset \Spec ( R)$ is a closed subset and $M$ is an $R$-module, we denote by 
$H^i_Z (R)$
the $i$-th local cohomology module of $M$ with support in $Z$. We notice that
$H^i_Z (R)=H^i_I (R)$,
where $Z=\cV(I)=\{P\in\Spec(R): I\subset P\}$.
For two
closed subsets of $\Spec(R)$, $Z_1\subset Z_2$, there is a long exact sequence of functors
\begin{equation}\label{LC2}
\ldots\to H^i_{Z_1}\to H^i_{Z_2}\to H^i_{Z_1/Z_2}\to \ldots
\end{equation}
We denote by $\cT$ any functor of the form $\cT =\cT_1\circ \dots\circ\cT_t$, where every functor $\cT_j$
is either $H^i_Z$ for some closed subset $Z$ of $\Spec (R)$
or the kernel, image or cokernel of some morphism in the previous long exact sequence
for some closed subsets $Z_1,Z_2$ of $\Spec(R)$.

Our aim in this manuscript is to prove the finiteness properties ($1$) and ($2$) for direct summands. We need to make some observations before we are able
to state our theorems precisely. Let $R\to S$ be a homomorphism of Noetherian rings. For an ideal $I\subset R$,
we have two functors associated with it, $H^i_{I}(-): R\hbox{-mod}\to 
R\hbox{-mod}$ 
and $H^i_{IS}(-): S\hbox{-mod}\to S\hbox{-mod}$, which are naturally isomorphic when we restrict them to $S$-modules.
Moreover, for two ideals of $R$, $I_2\subset I_1$, 
the natural morphism $H^i_{I_1}(-)\to H^i_{I_2}(-)$ is the same 
as the natural morphism $H^i_{I_1 S}(-)\to H^i_{I_2 S}(-)$ when we restrict the functors 
to $S$-modules. Thus, their kernel, cokernel and image are naturally isomorphic as
$S$-modules.  Hence, every functor $\cT$ for $R$ is a functor of the same type for
$S$ when we restrict it to $S$-modules.

As per the previous discussion, for an $S$-module, $M$, we will make no distinction in the notation or 
meaning of $\cT(M)$ whether it is
induced by ideals of $R$ or their extensions to $S$ and, therefore, by the corresponding closed subsets of their respective spectra.
Now, we are ready to state our main results.

\begin{Teo}\label{MainThm1}
Let $R\to S$ be a homomorphism of Noetherian rings that splits. Suppose that  
$\Ass_S\cT(S)$ is finite for a functor $\cT$ induced by extension of ideals of $R$.
Then, $\Ass_R\cT(R)$ is finite. 
In particular,  $\Ass_R H^i_I(R)$ is finite for every ideal $I\subset R$,
if $\Ass_S H^i_{IS}(S)$ is finite.
\end{Teo}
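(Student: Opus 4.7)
The plan is to realize $\cT(R)$ as an $R$-direct summand of $\cT(S)$ and then transfer the finiteness of associated primes from the $S$-side to the $R$-side. Since $R\to S$ splits, there is an $R$-linear retraction $\pi\colon S\to R$, so that $S=R\oplus C$ as $R$-modules with $C=\ker\pi$. My first step would be to check that every functor $\cT$ of the type considered preserves finite direct sums of $R$-modules: the local cohomology functors $H^i_Z$ commute with arbitrary direct sums (for instance via the $\check{\mbox{C}}$ech complex, since both localization and cohomology of a complex of modules commute with direct sums), the connecting morphisms in (\ref{LC2}) are natural transformations, and kernels, images, and cokernels of direct sums of morphisms are themselves direct sums. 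Iterating, I would obtain $\cT(S)=\cT(R)\oplus\cT(C)$ as $R$-modules; by the observation preceding the statement, this $R$-module structure on $\cT(S)$ agrees with the restriction of its canonical $S$-module structure.

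From this decomposition I would conclude $\Ass_R\cT(R)\subseteq\Ass_R\cT(S)$. The next ingredient I would invoke is the standard fact that for any $S$-module $M$, viewed as an $R$-module through $R\to S$, one has $\Ass_R M=\{Q\cap R:Q\in\Ass_S M\}$. The reverse inclusion is immediate, since if $Q=\Ann_S(m)$ then $\Ann_R(m)=Q\cap R$ is prime in $R$. For the forward direction, given $P=\Ann_R(x)$, the set $\{m\in M:Pm=0\}$ is an $S$-submodule of $M$ containing $x$, and after localizing at $R\setminus P$ one extracts an associated prime over $S\otimes_R R_P$ lying over $PR_P$, which contracts to a $Q\in\Ass_S M$ with $Q\cap R=P$. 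Applying this to $M=\cT(S)$ bounds $|\Ass_R\cT(S)|$ by $|\Ass_S\cT(S)|$, which is finite by hypothesis, and hence $\Ass_R\cT(R)$ is finite.

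The main obstacle, as I see it, is the additivity check for the Lyubeznik-style functors $\cT$ in the first step, together with the verification that the $R$-linear direct-sum decomposition of $\cT(S)$ is genuinely induced by the $R$-linear splitting of $S$. Everything else reduces to standard statements about the behavior of associated primes under restriction of scalars, so I do not expect serious difficulties. The final assertion about $H^i_I(R)$ is then simply the special case $\cT=H^i_I$.
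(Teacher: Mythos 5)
Your proposal is correct and follows essentially the same route as the paper: the splitting makes $\cT(R)$ an $R$-direct summand of $\cT(S)$ (you supply the additivity check that the paper leaves implicit), so $\Ass_R\cT(R)\subseteq\Ass_R\cT(S)$, and then one contracts associated primes along $R\to S$. The only cosmetic difference is in the contraction lemma: the paper proves the needed inclusion $\Ass_R M\subseteq\{Q\cap R:Q\in\Ass_S M\}$ by intersecting the minimal primes of $\Ann_S u$ with $R$, while you establish the full equality by localizing at $R\setminus P$; both are standard and equally valid.
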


\begin{Teo}\label{MainThm2}
Let $R\to S$ be a homomorphism of Noetherian rings that splits. 
Suppose that $S$ is a Cohen-Macaulay ring such that   
all the Bass numbers of $\cT(S)$, as an $S$-module, are finite for a functor $\cT$ induced by extension of ideals of $R$.
If $S$ is a finitely 
generated $R$-module, then all the Bass numbers of $\cT(R)$, as an $R$-module, are finite. In particular,  for every ideal $I\subset R$
the Bass numbers of $ H^i_I(R)$ are finite,
if the Bass numbers of $H^i_{IS}(S)$ are finite.
\end{Teo}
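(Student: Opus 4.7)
The plan is to reduce the theorem to showing that the Bass numbers of $\cT(S)$, viewed as an $R$-module, are finite. Since $R\to S$ splits as $R$-modules and $\cT$ is an additive $R$-linear functor on $R$-modules, applying $\cT$ to the splitting gives an $R$-linear retraction $\cT(S)\to\cT(R)$, so $\cT(R)$ is an $R$-direct summand of $\cT(S)$. Consequently, for every $P\in\Spec(R)$ and every $j\geq 0$, the $k(P)$-vector space $\Ext^j_{R_P}(k(P),\cT(R)_P)$ is a direct summand of $\Ext^j_{R_P}(k(P),\cT(S)_P)$, so it suffices to prove the latter is finite-dimensional.

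Fix $P\in\Spec(R)$. Since localization at $P$ commutes with $\cT$, with $\Ext$, and with Bass numbers, and preserves the splitting, the Cohen--Macaulay property of $S$, and the module-finiteness of $S$ over $R$, I may assume $(R,\mathfrak{m},k)$ is local with $P=\mathfrak{m}$. Then $S$ is semilocal and Cohen--Macaulay, module-finite over $R$, with maximal ideals $Q_1,\ldots,Q_r$ lying over $\mathfrak{m}$. The main tool is the Cartan--Eilenberg change-of-rings spectral sequence
\[
E_2^{p,q}=\Ext^p_S(\mathrm{Tor}^R_q(S,k),\cT(S))\ \Longrightarrow\ \Ext^{p+q}_R(k,\cT(S)).
\]
Using a finite-rank $R$-free resolution of $k$, each $\mathrm{Tor}^R_q(S,k)$ is a finitely generated $S$-module annihilated by $\mathfrak{m}$; since $S/\mathfrak{m}S$ is Artinian, it therefore has finite length as an $S$-module, supported at the $Q_i$.

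Next, I would dévissage $\mathrm{Tor}^R_q(S,k)$ through a composition series with quotients $k(Q_i)$ and use the long exact sequences of $\Ext_S$ to bound the $k$-dimension of $E_2^{p,q}$ by a finite sum of $\mu^p_S(Q_i,\cT(S))\cdot[k(Q_i):k]$; each term is finite by the hypothesis on $\cT(S)$. Since the spectral sequence is first quadrant, for each total degree $n$ only finitely many pairs $(p,q)$ with $p,q\geq 0$ contribute, so $\Ext^n_R(k,\cT(S))$ is finite-dimensional over $k$, as needed. The main obstacle will be carefully setting up the change-of-rings spectral sequence and handling the dévissage bookkeeping; I expect the Cohen--Macaulay hypothesis on $S$ to enter primarily through any alternate route based on local duality on $S$ (giving a cleaner description of the $S$-injective resolution of $\cT(S)$), while in the spectral sequence argument above it serves mainly to ensure standard depth and dimension behaviour of $S_P$ after localization.
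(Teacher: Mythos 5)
Your argument is correct, but it takes a genuinely different route from the paper's. The paper's proof (Proposition \ref{PropBass}) also begins by reducing to the $R$-Bass numbers of $M=\cT(S)$ via the split injection $\cT(R)\hookrightarrow\cT(S)$ and localizing at $P$, but from there it is resolutely elementary: it first shows that $R$ itself is Cohen--Macaulay (Lemma \ref{LemmaHyp}, descent of Cohen--Macaulayness along pure module-finite maps), picks a system of parameters $\underline{x}$ of $R_P$, and observes that the Koszul complex on $\underline{x}$ is simultaneously a free resolution of $R/(\underline{x})$ over $R$ and of $S/(\underline{x})S$ over $S$ --- this is exactly where the Cohen--Macaulay hypotheses are spent. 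Hence $\Ext^i_R(R/(\underline{x}),M)\cong\Ext^i_S(S/(\underline{x})S,M)$, which has finite length, and a d\'evissage lemma (Lemma \ref{LemmaLength}, the equivalence between finiteness of all Bass numbers and finite length of $\Ext^j_R(N,M)$ for a single finite-length $N$) converts this into finiteness of $\dim_k\Ext^i_R(k,M)$. Your change-of-rings spectral sequence $E_2^{p,q}=\Ext^p_S(\mathrm{Tor}^R_q(S,k),\cT(S))\Rightarrow\Ext^{p+q}_R(k,\cT(S))$ replaces the Koszul identification, and the d\'evissage you perform on the finite-length $S$-modules $\mathrm{Tor}^R_q(S,k)$ plays the role of Lemma \ref{LemmaLength}, transplanted to the first argument of $\Ext_S$. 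What your approach buys is that, as you half-suspect in your closing remark, the Cohen--Macaulay hypothesis is never actually used: module-finiteness alone makes each $\mathrm{Tor}^R_q(S,k)$ a finite-length $S$-module supported at the finitely many maximal ideals of $S$ over $\mathfrak{m}$, with residue fields finite over $k$, and first-quadrant convergence then gives finite length of $\Ext^n_R(k,\cT(S))$ unconditionally. So your proof is strictly more general (it removes the Cohen--Macaulay hypothesis from the theorem), at the cost of invoking the Cartan--Eilenberg spectral sequence where the paper uses only Koszul complexes and long exact sequences; the only cosmetic correction is that the Cohen--Macaulayness of $S$ plays no role at all in your argument, rather than entering ``through depth and dimension behaviour after localization.''
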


The first theorem holds when 
$S$ is a polynomial ring over a field and $R$ is the invariant ring of an 
action of a linearly reductive group over $S$. It also holds when $R\subset K[x_1,\ldots,x_n]$ is an integrally closed 
ring that is finitely generated as a $K$-algebra by monomials. This is because such a ring is a direct summand 
of a possibly different polynomial ring (cf. Proposition $1$ and Lemma $1$ in \cite{MelMonomials}).

We would like to mention another case in which an inclusion splits.
This is when $R\to S$ is a module finite 
extension of rings containing a field of characteristic zero such that $S$ has finite projective dimension as an $R$-module. Moreover, such a
splitting exists when Koh's conjecture holds (cf. \cite{Koh, Velez1, Velez2}). 
Therefore, if Koh's conjecture applies to $R\to S$ and
$\cT(S)$ has finite associated primes or finite Bass numbers, so does $\cT(R)$.

We point out that property ($3$) does not hold for direct summands of regular rings, even in the finite extension case. 
A counterexample is 
$R=K[x^3,x^2 y, xy^2,y^3]\subset S=K[x,y]$, where $S$ is the polynomial ring in two variables with coefficients
in a field $K$. The splitting of the inclusion is the map $\theta :S\to R$ defined in the monomials by 
$\theta(x^\alpha y^\beta)=x^\alpha y^\beta$ if $\alpha + \beta \in 3\ZZ$ and as zero otherwise. We have that the dimension of 
$\Supp (H^2_{(x^3,x^2 y, xy^2,y^3)} (R))$  is zero, but it is not an injective module, because $R$ is not a Gorenstein ring, 
since $R/(x^3,y^3)R$ has a two dimensional socle.  

The manuscript is organized as follows. In section $2$, we prove Theorem \ref{MainThm1}, and we show some
consequences. In particular, we exhibit a Gorenstein
$F$-regular UFD of positive characteristic that is not a direct summand, not even a pure subring,  of any regular ring. 
 In section 3, we give a proof for Theorem \ref{MainThm2}.
 

\section{Associated Primes}

\begin{Lemma}\label{PropAss} Let $R\to S$ be an injective homomorphism of Noetherian rings, and let $M$ 
be an $S$-module. Then, $\Ass_R M \subset \{ Q\cap R : Q\in \Ass_S M\}$.
\end{Lemma}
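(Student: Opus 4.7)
The plan is to start from a prime $P \in \Ass_R M$, write $P = \Ann_R(m)$ for some $m \in M$, and produce a prime $Q \in \Ass_S M$ whose contraction to $R$ is exactly $P$. The naive attempt---pick any $Q \in \Ass_S(Sm) \subseteq \Ass_S M$---only guarantees $Q \cap R \supseteq P$, not equality, since $\Ann_S(m)$ is generally not prime and its associated primes in $S$ may contract to primes of $R$ strictly larger than $P$. The key trick is to localize at $W = R \setminus P$ so that $P$ becomes the \emph{maximal} ideal of the base; then any containment at the level of contractions must be an equality.

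Concretely, I would set $\tilde R = R_P$, $\tilde S = W^{-1} S$, and $\tilde M = W^{-1} M$. Since $P \cap W = \emptyset$, the image $m/1 \in \tilde M$ is nonzero with $\Ann_{\tilde R}(m/1) = P\tilde R$, and so the ideal $\tilde J := \Ann_{\tilde S}(m/1)$ of $\tilde S$ satisfies $\tilde J \cap \tilde R = P\tilde R$. Because $\tilde S$ is Noetherian and $\tilde S/\tilde J \neq 0$, the set $\Ass_{\tilde S}(\tilde S/\tilde J)$ is nonempty; pick any $Q'' \in \Ass_{\tilde S}(\tilde S/\tilde J)$. The injection $\tilde S/\tilde J \hookrightarrow \tilde M$ sending $1 \mapsto m/1$ places $Q'' \in \Ass_{\tilde S}(\tilde M)$. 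From $Q'' \supseteq \tilde J$ we get $Q'' \cap \tilde R \supseteq P\tilde R$, and since $P\tilde R$ is the maximal ideal of $\tilde R$ while $Q'' \cap \tilde R$ is a proper (prime) ideal, equality is forced: $Q'' \cap \tilde R = P\tilde R$. Finally, the standard bijection $Q \leftrightarrow W^{-1}Q$ between primes of $S$ disjoint from $W$ and primes of $\tilde S$ preserves associated primes of the module and its localization, so $Q := Q'' \cap S$ lies in $\Ass_S M$; and $Q \cap R = Q'' \cap R = P\tilde R \cap R = P$.

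The only genuine obstacle is ensuring equality $Q \cap R = P$ rather than mere containment, and this is exactly what the localization is designed to deliver: after inverting $W$, the prime $P$ becomes maximal, and properness of the contraction of a prime converts the inclusion into an equality. The remaining ingredients---that $\Ass$ is preserved by localization at $W$ for primes disjoint from $W$, and that $\Ass_{\tilde S}$ of a nonzero finitely generated-type quotient is nonempty in the Noetherian setting---are standard and require no further work.
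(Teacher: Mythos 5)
Your proof is correct, but it takes a different route from the paper's. The paper works directly over $R$ and $S$: from $\Ann_R u = P$ it gets $(\Ann_S u)\cap R = P$, passes to radicals to write $P = \bigcap_j (Q_j\cap R)$ where the $Q_j$ are the (finitely many) minimal primes of $\Ann_S u$, and then uses the fact that a prime equal to a finite intersection of ideals each containing it must equal one of them; that $Q_j$ is minimal over $\Ann_S u$ places it in $\Ass_S M$. You instead localize at $W = R\setminus P$ so that $P$ becomes maximal in $R_P$, whence the containment $Q''\cap \tilde R \supseteq P\tilde R$ is automatically an equality for any $Q''\in\Ass_{\tilde S}(\tilde S/\tilde J)$, and then descend via the standard correspondence between $\Ass_{W^{-1}S}(W^{-1}M)$ and the primes of $\Ass_S M$ disjoint from $W$. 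Both arguments solve the same core difficulty (upgrading $Q\cap R\supseteq P$ to equality) by standard means; the paper's is slightly more self-contained (it needs only the radical computation and the minimal-primes-are-associated fact), while yours leans on the localization behavior of $\Ass$ but produces the equality with essentially no computation once $P$ is maximal. Either is a complete proof.
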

\begin{proof}
Let $P\in \Ass_R M$ and $u\in M$ be such that $\Ann_R u=P$. We have that $(\Ann_S u)\cap R= P$. 
Let $Q_1,\ldots,Q_t$ denote
the minimal primes of $\Ann_S u$. We obtain that
$$
P=\sqrt{P}=\sqrt{\Ann_S u} \cap R =(\cap_{j} Q_j ) \cap R=\cap_{j} (Q_j \cap R),  
$$
so, there exists a $ Q_j$ such that $P=  Q_j\cap R$. Since $Q_j$ is a minimal prime for 
$\Ann_S u$, we have that $Q_j \in \Ass_S M$ and the result follows.
\end{proof}
\begin{Def}
We say that a homomorphism of Noetherian rings $R \to S$ is pure if  $M = M \otimes_R R \to M \otimes_R S$ 
is injective for every $R$-module $M.$ We also say that $R$ is a pure subring of $S$. 
\end{Def}
\begin{Prop} [Cor. $6.6$ in \cite{HoRo}]\label{PropMel}
Suppose that $R \to S$ is a pure homeomorphism of Noetherian rings and that $ \cG$ is a complex
of $R$-modules. Then, the induced map
$j: H^i(\cG) \to H^i(\cG \otimes_R S)$
is injective.
\end{Prop}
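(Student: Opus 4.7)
The plan is to reduce the injectivity of $j$ on each cohomology module to the defining purity statement applied to a single well-chosen $R$-module, namely the cokernel of the relevant differential. The leverage here is that $-\otimes_R S$, while not exact on the left, is right exact, and this is enough to identify the image of each differential after tensoring.

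More concretely, I would take a class $[z]\in H^i(\cG)$ represented by a cocycle $z\in\ker(d^i:G^i\to G^{i+1})$ and suppose $j([z])=0$ in $H^i(\cG\otimes_R S)$. Unwinding, this says precisely that $z\otimes 1$ lies in the image of the tensored differential $G^{i-1}\otimes_R S\to G^i\otimes_R S$. Set $C:=G^i/d^{i-1}(G^{i-1})$ and apply $-\otimes_R S$ to the right exact sequence $G^{i-1}\to G^i\to C\to 0$; this identifies $C\otimes_R S$ with the cokernel of the tensored differential. Therefore, pushing $z\otimes 1$ down to $C\otimes_R S$, the image $\bar z\in C$ of $z$ satisfies $\bar z\otimes 1=0$.

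To finish, I would invoke purity of $R\to S$ on the $R$-module $C$: the natural map $c\mapsto c\otimes 1$ from $C$ to $C\otimes_R S$ is injective, so $\bar z=0$, i.e.\ $z\in d^{i-1}(G^{i-1})$, and hence $[z]=0$ in $H^i(\cG)$. There is no genuine obstacle in the argument; the only point worth flagging is that purity must be applied not to a cocycle module or to $H^i(\cG)$ itself, but to the cokernel $C$, since $C$ is exactly the object for which right exactness of $-\otimes_R S$ produces an exact comparison with $C\otimes_R S$.
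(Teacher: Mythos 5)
Your argument is correct and complete: reducing to the cokernel $C=G^i/d^{i-1}(G^{i-1})$, using right exactness of $-\otimes_R S$ to identify $C\otimes_R S$ with the cokernel of the tensored differential, and then applying the defining purity property to $C$ is exactly the standard proof of this fact. The paper itself offers no proof, citing it as Corollary 6.6 of Hochster--Roberts, and your argument is essentially the one found there, so there is nothing to correct.
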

\begin{Prop}
Let $R\to S$ be a pure homomorphism of Noetherian rings. Suppose that  
$\Ass_S H^i_{I}(R)$ is finite for some ideal $I\subset R$ and  $i\geq 0$. Then,
$\Ass_S H^i_{IS}(S)$ is finite.
\end{Prop}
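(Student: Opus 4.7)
The strategy is to use Proposition~\ref{PropMel} to inject $H^i_I(R)$ into $H^i_{IS}(S)$ at the level of $\check{\rm C}$ech cohomology, and then to transfer associated-prime finiteness across the canonical $S$-linear extension of this injection. Let $\cG$ be the $\check{\rm C}$ech complex on a finite generating set $f_1,\ldots,f_\ell$ of $I$. Since $\cG\otimes_R S$ is precisely the $\check{\rm C}$ech complex computing $H^i_{IS}(S)$ via the identifications $R_{f_J}\otimes_R S=S_{f_J}$, Proposition~\ref{PropMel} supplies an injective $R$-linear map $j:H^i_I(R)\hookrightarrow H^i_{IS}(S)$. By the universal property of the tensor product, $j$ extends canonically to an $S$-linear map
$$
\varphi:H^i_I(R)\otimes_R S\longrightarrow H^i_{IS}(S),
$$
and the hypothesis $\Ass_S H^i_I(R)$ finite is naturally read as the finiteness of $\Ass_S(H^i_I(R)\otimes_R S)$ (the canonical $S$-module envelope of $H^i_I(R)$, into which $H^i_I(R)$ embeds by purity).

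My next step would be to argue that $\varphi$ is surjective, so that $H^i_{IS}(S)$ is an $S$-quotient of the source. This reduces, via the explicit $\check{\rm C}$ech descriptions, to showing that every cocycle $(s_J/f_J^{n_J})_J \in\bigoplus_{|J|=i}S_{f_J}$ is an $S$-linear combination, modulo boundaries, of cocycles whose components lie in $\bigoplus_{|J|=i}R_{f_J}$; the scaling $s_J/f_J^{n_J}=s_J\cdot(1/f_J^{n_J})$ together with the Čech boundary relations is the natural source of such combinations. Having surjectivity, one considers the short exact sequence $0\to \ker\varphi\to H^i_I(R)\otimes_R S\to H^i_{IS}(S)\to 0$ and concludes the finiteness of $\Ass_S H^i_{IS}(S)$ from the finiteness of $\Ass_S(H^i_I(R)\otimes_R S)$ together with the analogous control of $\Ass_S\ker\varphi$, the latter coming from a parallel analysis applied one cohomological degree lower. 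Lemma~\ref{PropAss} is available throughout as a pull-back mechanism between $\Ass_S$ and $\Ass_R$, providing consistency checks at each step.

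The main obstacle is the analysis of $\varphi$ in the pure but non-flat setting. If $R\to S$ were flat, then local cohomology commutes with base change, $\varphi$ is an isomorphism, and the conclusion is tautological; for general pure homomorphisms one must argue directly on the $\check{\rm C}$ech complex, using the degree-by-degree injectivity furnished by Proposition~\ref{PropMel} to extract enough $R$-cocycles to span, after $S$-scaling, every cohomology class of $\cG\otimes_R S$, and to bound the associated primes of the resulting kernel. This is the delicate combinatorial heart of the argument and the place where the purity hypothesis must be used in an essential way.
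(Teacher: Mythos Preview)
The statement as printed contains a typo: the hypothesis and conclusion are swapped. The paper's own proof, together with the surrounding context (this proposition is a warm-up for Theorem~\ref{MainThm1}), makes clear that the intended statement is: \emph{if $\Ass_S H^i_{IS}(S)$ is finite, then $\Ass_R H^i_I(R)$ is finite}. Indeed, the paper's proof is two lines: purity gives an injection $H^i_I(R)\hookrightarrow H^i_{IS}(S)$ of $R$-modules (Proposition~\ref{PropMel}), hence $\Ass_R H^i_I(R)\subset \Ass_R H^i_{IS}(S)$, and Lemma~\ref{PropAss} bounds the latter set by the image of $\Ass_S H^i_{IS}(S)$ under contraction. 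Your reading of ``$\Ass_S H^i_I(R)$'' as $\Ass_S\bigl(H^i_I(R)\otimes_R S\bigr)$ is a reasonable attempt to make sense of the typo, but it is not what is meant.

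Setting that aside, your proposed argument has a genuine gap even on its own terms. From a short exact sequence $0\to K\to M\to N\to 0$ one obtains $\Ass M\subset \Ass K\cup \Ass N$, not the reverse inclusion; in particular, finiteness of $\Ass_S M$ and of $\Ass_S K$ says nothing about $\Ass_S N$. A standard example is $0\to\ZZ\to\QQ\to\QQ/\ZZ\to 0$ over $\ZZ$: the middle term has a single associated prime, yet the quotient has infinitely many. So even if you established surjectivity of $\varphi:H^i_I(R)\otimes_R S\to H^i_{IS}(S)$ and controlled $\Ass_S\ker\varphi$, the conclusion about $\Ass_S H^i_{IS}(S)$ would not follow. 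The useful direction of the $\Ass$ inequality is for \emph{submodules}, which is exactly how the paper uses the injection $H^i_I(R)\hookrightarrow H^i_{IS}(S)$, in the opposite direction from what you are attempting.
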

\begin{proof}
Since $H^i_{I}(R)\to H^i_{IS}(S)$ is injective by Proposition \ref{PropMel}, $\Ass_R H^i_{I}(R)\subset\Ass_R H^i_{IS}(S)$ and the result follows by Lemma \ref{PropAss}. 
\end{proof}
\begin{proof}[Proof of Theorem \ref{MainThm1}]
The splitting between $R$ and $S$ makes
 $\cT (R)$ into a direct summand of  $\cT (S)$; in particular, $\cT (R)\subset\cT (S)$.  
Therefore, $\Ass_R \cT(R)\subset\Ass_R \cT(S)$ and the result follows by Lemma \ref{PropAss}. 
\end{proof}

If $R$ is a ring containing a field of characteristic $p>0$,  Theorem \ref{MainThm1} gives a method for 
showing that $R$ is not a direct summand of a regular ring. We used this method to prove that there exists a Gorenstein strongly 
$F$-regular UFD of characteristic $p>0$ that is not a direct summand of any regular ring.

\begin{Teo}[Thm. 5.4 in \cite{Anu}] Let K be a field, and consider the hypersurface
$$
R=\frac{K[r, s, t, u, v, w, x, y, z]}{(su^2x^2+ sv^2y^2+ tuxvy + rw^2z^2)}.
$$
Then, $R$ is a unique factorization domain for which the local cohomology module
$H^3_{(x,y,z)}(R)$ has infinitely many associated prime ideals. This is preserved if $R$ is replaced
by the localization at its homogeneous maximal ideal. The hypersurface $R$ has rational
singularities if $K$ has characteristic zero, and it is $F$-regular if $K$ has positive characteristic.
\end{Teo}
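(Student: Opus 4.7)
The plan is to address the four distinct claims — the UFD property, infinite associated primes of $H^3_{(x,y,z)}(R)$, preservation under localization, and the singularity statements — essentially independently. For the UFD claim I would apply Nagata's criterion. The defining polynomial $f = su^2x^2 + sv^2y^2 + tuxvy + rw^2z^2$ is linear in $r$, so after inverting $wz$ one can solve for $r$ and obtain a localization of an eight-variable polynomial ring, which is automatically a UFD. It then suffices to exhibit a prime element of $R$ dividing $wz$; for instance, verify that $(w)$ is prime by noting that $R/(w) = K[r,s,t,u,v,x,y,z]/(s(ux)^2 + t(ux)(vy) + s(vy)^2)$ and that the binary quadratic form $sA^2 + tAB + sB^2$ is irreducible over $K[s,t]$ because its discriminant $t^2-4s^2$ is not a square.

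For the infinite associated primes, I would compute $H^3_{(x,y,z)}(R)$ via the \v{C}ech complex on $x,y,z$, so cohomology classes are represented by fractions $g/(xyz)^n$ modulo the images of the localizations at pairs. Writing $A=ux$, $B=vy$, $C=wz$, the hypersurface relation $sA^2 + tAB + sB^2 + rC^2 = 0$ iterates to express $C^{2n}$ as a polynomial in $A,B$ with coefficients in $K[r,s,t]$. For each $n \geq 1$ I would construct an explicit class $\eta_n = [h_n/(xyz)^n]$ with $h_n$ built out of powers of $C$, then compute its annihilator and extract an associated prime $P_n$ that genuinely depends on $n$; the standard device is to arrange for $P_n$ to contain an $n$-th power of a non-nilpotent element (for example, a linear form in $r, s, t$) so that distinct $n$ yield distinct minimal generators. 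This explicit construction, together with the bookkeeping of annihilators modulo $f$, is the main obstacle in the argument.

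For preservation under localization at the homogeneous maximal ideal $\mathfrak{m}=(r,s,t,u,v,w,x,y,z)$, I would observe that each $P_n$ produced above is homogeneous and contained in $\mathfrak{m}$, and hence remains associated to $H^3_{(x,y,z)}(R_\mathfrak{m}) = H^3_{(x,y,z)}(R)_\mathfrak{m}$. For the singularity claims in characteristic $p>0$, I would first establish $F$-purity by Fedder's criterion, verifying that $f^{p-1}$ has a monomial term in which every exponent is strictly less than $p$, and then upgrade to strong $F$-regularity by exhibiting a suitable test element (a partial derivative of $f$ is a natural candidate). Finally, rational singularities in characteristic zero can be deduced either by a direct resolution-of-singularities argument on this explicit hypersurface, or via reduction modulo $p$ combined with the theorem of Hara--Watanabe and Mehta--Srinivas that $F$-regular type implies rational singularities.
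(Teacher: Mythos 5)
First, a point of order: the paper does not prove this statement at all --- it is quoted verbatim as Theorem 5.4 of Singh--Swanson \cite{Anu} and used as a black box. So there is no internal proof to compare against; your proposal has to be judged against the actual argument in \cite{Anu}.

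The peripheral parts of your sketch are essentially the standard arguments and are fine in outline. For the UFD claim, Nagata's criterion applied to the multiplicative set generated by $w$ and $z$ does work, but you need \emph{both} $w$ and $z$ to be prime (they play symmetric roles, so this is harmless), and your irreducibility argument for $R/(w)$ via the discriminant $t^2-4s^2$ fails in characteristic $2$ (there $t^2-4s^2=t^2$ is a square); the robust argument is that $su^2x^2+sv^2y^2+tuvxy$ has degree one in the variables $s,t$ and its coefficients $u^2x^2+v^2y^2$ and $uvxy$ have no common factor. The localization statement follows exactly as you say, since associated primes of a graded module are homogeneous and $\Ass M_{\mathfrak m}=\{P\in\Ass M: P\subset\mathfrak m\}$. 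The $F$-regularity and rational-singularity outline (Fedder/Glassbrenner plus reduction mod $p$) is the standard route, though \cite{Anu} in fact has to do real work here too.

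The genuine gap is that the heart of the theorem --- infinitely many associated primes of $H^3_{(x,y,z)}(R)$ --- is precisely the step you defer: ``I would construct an explicit class $\eta_n$ \dots\ and extract an associated prime $P_n$ that genuinely depends on $n$'' is a statement of intent, not an argument. The hard directions are (i) showing that the candidate elements of the annihilator of $\eta_n$ really do kill it while other elements do \emph{not}, which amounts to proving that certain fractions are not in the image of $\bigoplus R_{xy}\oplus R_{xz}\oplus R_{yz}\to R_{xyz}$ modulo $f$, and (ii) showing that the resulting primes are pairwise distinct. In \cite{Anu} this is several pages of computation, and the distinctness comes from a recursively defined sequence of polynomials in $s,t$ (a Chebyshev-like recursion) having infinitely many distinct irreducible factors. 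Your proposed device for distinctness --- arranging for $P_n$ to contain an $n$-th power of a fixed non-nilpotent element --- cannot work even in principle: a prime ideal contains $g^n$ if and only if it contains $g$, so varying the exponent produces the same prime. As written, the proposal does not establish the central claim of the theorem.
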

\begin{Cor} Let $R$ be as in the previous theorem taking $K$ of positive characteristic. Then, $R$ is a Gorenstein
$F$-regular UFD that is not a pure subring of any regular ring. In particular, $R$ is not direct summand of any regular ring.
\end{Cor}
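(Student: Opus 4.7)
The plan is to argue by contradiction, using Theorem \ref{MainThm1} (in the pure-subring form obtained by combining Proposition \ref{PropMel} with Lemma \ref{PropAss}) to transfer finiteness of associated primes from a hypothetical regular overring back to $R$, where Singh's theorem forbids it. The positive assertions about $R$ are essentially immediate: $R$ is a hypersurface in a polynomial ring, hence a complete intersection and therefore Gorenstein, while $F$-regularity and the UFD property are part of the quoted theorem.

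For the main claim, I would suppose toward contradiction that $R\hookrightarrow S$ is a pure homomorphism with $S$ regular. Since $R$ has characteristic $p>0$, any ring homomorphism out of $R$ sends $p\cdot 1$ to $0$, so the overring $S$ also has characteristic $p$. Then the Huneke--Sharp theorem recalled in the introduction gives that $\Ass_S H^3_{(x,y,z)S}(S)$ is finite.

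Next I would pull this finiteness down to $R$ exactly as in the proof of Theorem \ref{MainThm1}. Proposition \ref{PropMel}, applied to the $\check{\mbox{C}}$ech complex on $x,y,z$, shows that $H^3_{(x,y,z)}(R)\to H^3_{(x,y,z)S}(S)$ is injective, so $\Ass_R H^3_{(x,y,z)}(R)\subseteq \Ass_R H^3_{(x,y,z)S}(S)$. Lemma \ref{PropAss}, applied to the $S$-module $H^3_{(x,y,z)S}(S)$, bounds this in turn by $\{Q\cap R : Q\in \Ass_S H^3_{(x,y,z)S}(S)\}$, which is finite by the previous step. This contradicts Singh's statement that $H^3_{(x,y,z)}(R)$ has infinitely many associated primes. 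The parenthetical consequence about direct summands is then automatic, since any split inclusion $R\hookrightarrow S$ is pure: tensoring the splitting $S\to R$ with any $R$-module $M$ shows that $M\to M\otimes_R S$ has a retraction and is therefore injective.

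The argument is essentially a repackaging of the machinery already in place, so there is no deep obstacle. The one step that requires a moment's care is the observation that the characteristic of $S$ must match that of $R$, without which Huneke--Sharp would not apply to $S$ and the whole strategy would collapse.
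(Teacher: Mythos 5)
Your argument is correct and is essentially the paper's own proof, spelled out in more detail: the paper likewise combines Proposition \ref{PropMel}, Lemma \ref{PropAss} (via Theorem \ref{MainThm1} for the split case), and the finiteness of associated primes of local cohomology over regular rings of characteristic $p$ to contradict Singh--Swanson. The only cosmetic point is that for a possibly non-local regular $S$ the finiteness of $\Ass_S H^3_{IS}(S)$ is Lyubeznik's $F$-module theorem rather than Huneke--Sharp proper, which is the reference the paper actually cites.
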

\begin{proof}
Since $H^3_{(x,y,z)}(R)$ has infinitely many associated prime ideals, it cannot be a direct summand or pure subring 
of a regular ring by
Theorem \ref{MainThm1}, Proposition \ref{PropMel} and finiteness properties of regular rings 
of positive characteristic (cf. \cite{LyuFMod}).
\end{proof}
\begin{Teo}[Thm. 1 in \cite{Zhang}] Assume  that $S=K[x_1,\ldots,x_n]$ is a polynomial ring in $n$ variables over a 
field $K$ of characteristic $p>0$. 
Suppose that $I = (f_1,\ldots, f_s)$ is an ideal of $S$ such that 
$\sum_i \deg f_i < n$.
Then $\dim S/Q \geq n-\sum_i \deg f_i$ 
for all $Q\in\Ass_S H^i_I(S)$.
\end{Teo}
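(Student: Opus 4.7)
The plan is to exploit the $F$-finite $F$-module structure of $H^i_I(S)$ in the sense of Lyubeznik together with a degree estimate. First, I would reduce to the case where $f_1,\ldots,f_s$ are homogeneous of degrees $d_j=\deg f_j$ with respect to the standard $\ZZ$-grading on $S$; this reduction preserves the hypothesis $\sum_j d_j<n$, and the associated primes only enlarge in a controlled way when one homogenizes. In the homogeneous setting $H^i_I(S)$ is a graded $S$-module and all its associated primes are homogeneous.

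Next, I would use the identification
\begin{equation*}
H^i_I(S)\;\cong\;\varinjlim_{e}\Ext^i_S(S/I^{[p^e]},S),
\end{equation*}
with transition maps induced by the natural surjections $S/I^{[p^{e+1}]}\to S/I^{[p^e]}$ coming from $I^{[p^{e+1}]}\subseteq I^{[p^e]}$. Since the Frobenius functor is exact on the regular ring $S$, it pulls back this Ext system compatibly, making $H^i_I(S)$ into an $F$-finite $F$-module with root $N=\Ext^i_S(S/I,S)$. For $Q\in\Ass_S H^i_I(S)$, one can find $e\gg 0$ and a homogeneous element $\eta\in\Ext^i_S(S/I^{[p^e]},S)$ whose annihilator is $Q$, because a prime annihilator in a direct limit of finitely generated modules is realized at some finite stage (and the $F$-structure makes the transitions compatible with annihilators). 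Since $I^{[p^e]}=(f_1^{p^e},\ldots,f_s^{p^e})$ has generators of degrees $p^e d_j$, dualizing a minimal graded free resolution of $S/I^{[p^e]}$ shows that every homogeneous element of $\Ext^i_S(S/I^{[p^e]},S)$ has degree at most $p^e\sum_j d_j+c$ for a constant $c$ independent of $e$.

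The crux is then to convert this upper degree bound into a lower bound on $\dim S/Q$. Writing $\dim S/Q=n-k$, one may assume after a homogeneous linear change of coordinates that $(x_1,\ldots,x_k)\subseteq Q$. Then $\eta$ is annihilated by $x_1,\ldots,x_k$, so it lies in the subspace of $\Ext^i_S(S/I^{[p^e]},S)$ killed by these linear forms; a socle-type computation, applied to the Frobenius power $(x_1^{p^e},\ldots,x_k^{p^e})$ and using the complementary subring $K[x_{k+1},\ldots,x_n]$, produces a lower bound of order $p^e k$ on the degree of $\eta$. Balancing against the upper bound $p^e\sum_j d_j+c$ and sending $e\to\infty$ gives $k\leq\sum_j d_j$, i.e., $\dim S/Q\geq n-\sum_j\deg f_j$. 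The main obstacle is the degree analysis in the non-regular-sequence case, where $\Ext^i_S(S/I^{[p^e]},S)$ is not a simple cyclic module and the shifts in its minimal free resolution must be tracked carefully; the flatness of Frobenius on $S$ is what guarantees that all the relevant bounds scale linearly in $p^e$, so that the asymptotic comparison succeeds after dividing out the common Frobenius factor.
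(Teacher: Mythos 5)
This statement is quoted verbatim from Zhang's paper \cite{Zhang}; the manuscript under review offers no proof of it, so your attempt can only be judged on its own terms. Judged that way, the overall strategy (realize $H^i_I(S)$ as $\varinjlim_e\Ext^i_S(S/I^{[p^e]},S)$, trap an associated prime at a finite stage, and play an upper degree bound against a lower ``socle'' bound that grows like $p^e k$) has the right flavor, but the crux of the argument rests on a false claim. You assert that \emph{every} homogeneous element of $\Ext^i_S(S/I^{[p^e]},S)$ has degree at most $p^e\sum_j d_j+c$. A nonzero finitely generated graded $S$-module of positive dimension is nonzero in arbitrarily large degrees, so such a uniform upper bound can only hold when the module has finite length --- precisely the case the theorem is not about. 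Worse, the bound fails even for the elements you care about: if $\dim S/Q>0$, then $S\eta\cong S/Q$ contains homogeneous elements $x\eta$, $x\notin Q$, of arbitrarily large degree whose annihilator is again exactly $Q$. What dualizing the minimal free resolution actually controls is the degrees of a minimal \emph{generating} set of $\Ext^i_S(S/I^{[p^e]},S)$ (the twists do scale by $p^e$ because the resolution of $S/I^{[p^e]}$ is the Frobenius pullback of that of $S/I$), and $\eta$ need not be a generator; so the key inequality $\deg\eta\leq p^e\sum_j d_j+c$ is unproved and, as stated, unprovable.

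There are two further problems. First, even if one could arrange an upper bound of the form $p^e\sum_j d_j+O(p^e)$ and a lower bound of the form $p^e k-O(p^e)$, the error terms are themselves linear in $p^e$ (the generator degrees of $F^e(N)$ scale by $p^e$, as you yourself note), so dividing by $p^e$ and letting $e\to\infty$ does \emph{not} remove them: you would conclude only $k\leq\sum_j d_j+C$ for some constant $C$, not the sharp bound $k\leq\sum_j d_j$. The comparison works only if the error terms are $o(p^e)$, which is exactly the delicate point and is nowhere established. Second, the opening reduction to homogeneous $f_j$ is not routine: the theorem concerns arbitrary polynomials with $\deg$ the total degree, homogenization changes both the ring (to $n+1$ variables) and the ideal, and the asserted control of associated primes under homogenization/dehomogenization, together with the attendant dimension shift, needs a proof. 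In short, the skeleton is plausible but the load-bearing degree estimates are either false or missing, so this does not yet constitute a proof.
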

\begin{Cor}
Let  $S=K[x_1,\ldots,x_n]$ be a polynomial ring in $n$ variables over a field $K$ of characteristic $p>0$.
Let $R\to S$ be a homomorphism of Noetherian rings that splits.
Suppose that $I = (f_1,\ldots, f_s)$ is an ideal of $R$ such that $\sum_i \deg (f_i) < \Dim{R}$.
If  $S$ is a finitely generated $R$-module,
then $\dim R/P \geq \dim R-\sum_i \deg f_i$ for all $P\in\Ass_R H^i_I(R)$.
\end{Cor}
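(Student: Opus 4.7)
The plan is to combine Zhang's theorem applied to the ideal $IS\subset S$ with the associated-prime correspondence that already underlies Theorem \ref{MainThm1}, and then convert dimensions across the integral extension $R\subset S$.

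First I would verify that $\dim R=n$. Since $R\to S$ splits it is injective, and since $S$ is module-finite over $R$ the extension is integral, so $\dim R=\dim S=n$. This identifies the hypothesis $\sum_i\deg f_i<\dim R$ with the hypothesis $\sum_i\deg f_i<n$ needed to invoke Zhang's theorem on the polynomial ring $S$. Applying Zhang's theorem to the ideal $IS=(f_1,\ldots,f_s)S$ yields $\dim S/Q\geq n-\sum_i\deg f_i$ for every $Q\in\Ass_S H^i_{IS}(S)$.

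Next, let $P\in\Ass_R H^i_I(R)$. The splitting of $R\to S$ makes $H^i_I(R)$ a direct summand, hence a submodule, of $H^i_{IS}(S)$, exactly as in the proof of Theorem \ref{MainThm1}. So $P\in\Ass_R H^i_{IS}(S)$, and Lemma \ref{PropAss} applied to the $S$-module $H^i_{IS}(S)$ produces a prime $Q\in\Ass_S H^i_{IS}(S)$ with $P=Q\cap R$.

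Finally, since $S$ is module-finite over $R$, the quotient $S/Q$ is module-finite (and in particular integral) over $R/(Q\cap R)=R/P$, so $\dim R/P=\dim S/Q\geq n-\sum_i\deg f_i=\dim R-\sum_i\deg f_i$, which is the desired bound. I do not anticipate a serious obstacle; the only point worth checking is that $\deg f_i$ is measured in the polynomial ring $S$, but this is unambiguous because the $f_i\in R$ are viewed as polynomials in $S$ via the split injection $R\hookrightarrow S$.
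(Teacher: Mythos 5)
Your proposal is correct and follows essentially the same route as the paper: split $H^i_I(R)\hookrightarrow H^i_{IS}(S)$, pass to a prime $Q\in\Ass_S H^i_{IS}(S)$ lying over $P$ via Lemma \ref{PropAss}, apply Zhang's theorem in $S$, and transfer the dimension bound through the integral extension. You are in fact slightly more careful than the paper, which leaves implicit both the identification $\dim R=n$ and the equality $\dim R/P=\dim S/Q$ coming from integrality.
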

\begin{proof}
Since $H^i_I (-)$ commutes with direct sum of $R$-modules, we have that a splitting of $R\hookrightarrow S$ over $R$  induces an splitting of
$H^i_I (R) \hookrightarrow H^i_I (S)$ over $R$. Then, by Lemma \ref{PropAss}, for any $P\in\Ass_R H^i_I(R)\subset \Ass_R H^i_I(S)$ there exists
$Q\in\Ass_R H^i_I(S)$ such that $P=Q\cap R$ and then $\Dim R/P=\Dim S/Q > n-\sum_i \deg f_i$, and the result follows.
\end{proof}

\section{Bass Numbers}
\begin{Lemma}\label{LemmaLength}
Let $(R,m,K)$ be a local ring and $M$ be an $R$-module. Then, the following are equivalent:
\begin{itemize} 
\item[a)] $\Dim_K(\Ext^j_R(K,M))$ is finite for all $j\geq 0$;
\item[b)] $\Length(\Ext^j_R(N,M))$ is finite for every finite length module $N$ for all $j\geq 0$;
\item[c)] there exists one module $N$ of finite length such that $\Length(\Ext^j_R(N,M))$ is finite for all $j\geq 0$.
\end{itemize}
\end{Lemma}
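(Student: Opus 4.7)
The plan is to close the cycle $(a) \Rightarrow (b) \Rightarrow (c) \Rightarrow (a)$. Note that $(a)$ is the special case $N = K$ of $(b)$, and $(b)$ trivially implies $(c)$ since $K$ itself is a finite length $R$-module.

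For $(a) \Rightarrow (b)$, I would induct on $\ell(N)$. The base case $\ell(N) = 1$ gives $N \cong K$ and reduces to $(a)$. For $\ell(N) \geq 2$, pick a short exact sequence $0 \to N' \to N \to K \to 0$ with $\ell(N') = \ell(N) - 1$ (via any surjection onto a simple quotient, which exists since $N/mN \neq 0$). The long exact sequence for $\Ext^\bullet_R(-, M)$ realizes $\Ext^j_R(N, M)$ as an extension of a submodule of $\Ext^j_R(N', M)$ by a quotient of $\Ext^j_R(K, M)$, both of finite length by the inductive hypothesis and $(a)$ respectively.

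The hard direction is $(c) \Rightarrow (a)$. The plan is to analyze the minimal injective resolution $E^\bullet$ of $M$. Since $N$ has finite length, $\Hom_R(N, E_R(R/P)) = 0$ for every prime $P \neq m$, so $\Hom_R(N, E^j) \cong (N^\vee)^{\mu_j(m, M)}$, where $N^\vee = \Hom_R(N, E_R(R/m))$ is the Matlis dual (of length $\ell(N)$) and $\mu_j(m, M) = \Dim_K \Ext^j_R(K, M)$ is the $j$-th Bass number of $M$ at $m$. Fixing any surjection $\pi \colon N \twoheadrightarrow K$, precomposition with $\pi$ embeds $\Hom_R(K, E^\bullet)$ as a subcomplex of $\Hom_R(N, E^\bullet)$; the minimality of $E^\bullet$, i.e., $d^j(\mathrm{Soc}(E^j)) = 0$, forces all differentials on this subcomplex to vanish. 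The resulting map on cohomology $K^{\mu_j(m, M)} \to \Ext^j_R(N, M)$ therefore has kernel of length at most $\Length \Im(d^{j-1}) \leq \Length \Hom_R(N, E^{j-1}) = \ell(N) \cdot \mu_{j-1}(m, M)$, so its image has length at least $\mu_j(m, M) - \ell(N) \cdot \mu_{j-1}(m, M)$.

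Induction on $j$ then proceeds: the base case $j = 0$ yields directly $\mu_0(m, M) \leq \Length \Hom_R(N, M) < \infty$ by $(c)$. For $j \geq 1$, assuming $\mu_{j-1}(m, M) < \infty$, the finite length of $\Ext^j_R(N, M)$ combined with the bound above forces $\mu_j(m, M) < \infty$, so $\Ext^j_R(K, M)$ has finite length. The main obstacle is establishing this length bound on the image, which rests crucially on the minimality of the injective resolution, used both to collapse the differentials on the subcomplex $\Hom_R(K, E^\bullet)$ and to control the size of $\Im(d^{j-1})$ via the inductive hypothesis.
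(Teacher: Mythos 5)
Your proposal is correct, and directions a) $\Rightarrow$ b) and b) $\Rightarrow$ c) coincide with the paper's (the same induction on $\Length(N)$ via a short exact sequence with simple quotient and the long exact sequence for $\Ext^\bullet_R(-,M)$). The interesting divergence is in c) $\Rightarrow$ a). The paper argues by contrapositive and stays entirely at the level of long exact sequences: taking $j$ minimal with $\Dim_K\Ext^j_R(K,M)=\infty$, it shows by a second induction on $\Length(N)$ that $\Ext^i_R(N,M)$ has finite length for $i<j$ while the map $\Ext^j_R(K,M)/\Im(\Ext^{j-1}_R(N',M))\to\Ext^j_R(N,M)$ is injective, forcing $\Ext^j_R(N,M)$ to have infinite length for \emph{every} finite length $N$. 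Your route instead goes through the minimal injective resolution $E^\bullet$ of $M$: you use that $\Hom_R(N,E_R(R/P))=0$ for $P\neq m$, Matlis duality to compute $\Length\Hom_R(N,E^{j})=\Length(N)\cdot\mu_j(m,M)$, and minimality to kill the differentials on the subcomplex $\Hom_R(K,E^\bullet)$, yielding the quantitative bound $\mu_j(m,M)\leq \Length\Ext^j_R(N,M)+\Length(N)\cdot\mu_{j-1}(m,M)$ and then inducting on $j$. Both arguments are sound; yours invokes heavier standard machinery (structure of injectives over Noetherian rings, Matlis duality, commuting $\Hom$ from the finitely generated $N$ past direct sums) but produces an explicit inequality relating the Bass numbers of $M$ to the lengths $\Length\Ext^j_R(N,M)$, whereas the paper's contrapositive argument is shorter, self-contained, and needs nothing beyond the socle of a nonzero finite length module being nonzero.
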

\begin{proof} \emph{ a) $\Rightarrow$ b): }Our proof will be by induction on $h=\Length(N)$.
If $h=1$, then $N=K$, and the proof follows from our assumption.   We will assume that the statement is true for $h$ and prove it 
when $\Length (N)=h+1$. In this case, there is a short exact sequence
$0\to K\to N\to N'\to 0$, where $N'$ has length $h$. From the induced long exact sequence  
$$\ldots \to\Ext^{j-1}_R(N',M)\to \Ext^j_R(K,M)\to \Ext^j_R(N,M)\to\ldots,$$
we see that $\Length(\Ext^i_R(N,M))$ is finite for all $i\geq 0$. \\

\emph{ b) $\Rightarrow$ c): } Clear.\\

\emph{c) $\Rightarrow$ a): } We will prove the contrapositive. 
Let $j$ be the minimum non-negative integer such that $\Dim_K (\Ext^j_R(K,M))$ is infinite. We claim
that $\Length(\Ext^i_R(N,M))<\infty $ for $i<j$ and $\Length(\Ext^j_R(N,M))=\infty$ for any module $N$ of finite length. 
Our proof will be by induction on $h=\Length(N)$.
If $h=1$, then $N=K$ and it follows from our choice of $j$. We will assume that this is true for $h$ and prove it 
when $\Length (N)=h+1$. We have a short exact sequence
$0\to K\to N\to N'\to 0$, where $N'$ has length $h$. From the induced  long exact sequence  
$$\ldots \to\Ext^{j-1}_R(N',M)\to \Ext^j_R(K,M)\to \Ext^j_R(N,M)\to\ldots,$$
we have that $\Length(\Ext^i_R(N,M))<\infty$ for $i<j$ and that the map \\$\Ext^j_R(K,M)/\Im(\Ext^{j-1}_R(N',M))\to \Ext^j_R(N,M)$ is injective. Therefore, \\$\Length(\Ext^j_R(N,M))=\infty$.
\end{proof}

\begin{Lemma}\label{LemmaHyp}
Let $R\to S$ be a pure homomorphism of Noetherian rings. Assume that $S$ is a Cohen-Macaulay ring.
If $S$ is finitely generated as an $R$-module, then $R$ is a Cohen-Macaulay ring.
\end{Lemma}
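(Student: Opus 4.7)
The plan is to reduce to the local case, show that $S$ is ``equidimensional over $R$'' via a going-down argument powered by purity, and conclude by combining the injection of local cohomologies supplied by Proposition \ref{PropMel} with the vanishing forced by the Cohen-Macaulay hypothesis on $S$. Since being Cohen-Macaulay is a local property and localization preserves purity, module-finiteness, and Cohen-Macaulayness, I may assume $(R,\mathfrak{m})$ is local of dimension $d$. Integrality gives $\Dim S=d$, and $S$ is semi-local with maximal ideals $\mathfrak{n}_1,\ldots,\mathfrak{n}_r$, each contracting to $\mathfrak{m}$.

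The key intermediate claim is that $\Dim S_{\mathfrak{n}_j}=d$ for every $j$. I would establish going-down for the pure integral extension $R\hookrightarrow S$ directly: given primes $\mathfrak{p}\subset\mathfrak{p}'$ in $R$ and a prime $\mathfrak{q}'$ in $S$ with $\mathfrak{q}'\cap R=\mathfrak{p}'$, choose a prime $\mathfrak{q}$ of $S$ minimal among those with $\mathfrak{p}S\subset\mathfrak{q}\subset\mathfrak{q}'$. Such a $\mathfrak{q}$ is automatically minimal over $\mathfrak{p}S$ globally in $S$, so by Cohen--Seidenberg, $\mathfrak{q}\cap R$ is minimal over $\mathfrak{p}S\cap R$; and purity, applied to the $R$-module $R/\mathfrak{p}$, yields $\mathfrak{p}S\cap R=\mathfrak{p}$, whence $\mathfrak{q}\cap R=\mathfrak{p}$. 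Iterating along a saturated chain in $R$ from a minimal prime of full dimension up to $\mathfrak{m}$ produces a chain of length $d$ in $S$ terminating at any prescribed $\mathfrak{n}_j$, giving $\Dim S_{\mathfrak{n}_j}\geq d$; the reverse inequality is immediate from integrality.

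With $\Dim S_{\mathfrak{n}_j}=d$ for every $j$, each $S_{\mathfrak{n}_j}$ is Cohen-Macaulay of dimension $d$, so $H^i_{\mathfrak{n}_j}(S_{\mathfrak{n}_j})=0$ for $i<d$. Because $\sqrt{\mathfrak{m}S}$ equals the Jacobson radical of $S$, the local cohomology $H^i_\mathfrak{m}(S)$ decomposes as $\bigoplus_j H^i_{\mathfrak{n}_j}(S_{\mathfrak{n}_j})$ and thus vanishes for $i<d$. Applying Proposition \ref{PropMel} to the $\check{\mbox{C}}$ech complex $\cG$ of $R$ on generators of $\mathfrak{m}$ produces an injection $H^i_\mathfrak{m}(R)\hookrightarrow H^i(\cG\otimes_R S)=H^i_\mathfrak{m}(S)$, forcing $H^i_\mathfrak{m}(R)=0$ for $i<d$. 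This gives $\operatorname{depth}(R)\geq d=\Dim R$, so $R$ is Cohen-Macaulay. The step I expect to require most care is the going-down argument: the standard Cohen--Seidenberg going-down theorem requires a normality hypothesis we do not have here, so purity must fill in, which it does cleanly through the cyclic-purity identity $\mathfrak{p}S\cap R=\mathfrak{p}$.
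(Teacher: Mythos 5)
There is a genuine gap: your key intermediate claim, that $\Dim S_{\mathfrak{n}_j}=d$ for every maximal ideal $\mathfrak{n}_j$ of $S$, is false under the stated hypotheses, and the going-down argument offered for it contains a specific error. Take $R=k[x]_{(x)}$ and $S=R\times R/xR$ with $r\mapsto(r,\bar r)$: this splits as a map of $R$-modules (project onto the first factor), hence is pure; $S$ is module-finite and Cohen--Macaulay; yet the maximal ideal $\mathfrak{n}_2=R\times 0$ contracts to $(x)$ and $\Dim S_{\mathfrak{n}_2}=0<1=d$. Going-down fails here: the only prime of $S$ contained in $\mathfrak{n}_2$ is $\mathfrak{n}_2$ itself, so nothing below it lies over $(0)$. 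The faulty step is ``by Cohen--Seidenberg, $\mathfrak{q}\cap R$ is minimal over $\mathfrak{p}S\cap R$.'' Lying-over and incomparability guarantee that every minimal prime of $\mathfrak{p}S\cap R$ is the contraction of \emph{some} minimal prime of $\mathfrak{p}S$, not that \emph{every} minimal prime of $\mathfrak{p}S$ contracts to one: in the example, with $\mathfrak{p}=(0)$ and $\mathfrak{q}'=\mathfrak{n}_2$, your recipe produces $\mathfrak{q}=R\times 0$, a minimal prime of $S=S/\mathfrak{p}S$ whose contraction $(x)$ is not minimal over $(0)$. Cyclic purity does give $\mathfrak{p}S\cap R=\mathfrak{p}$ and hence a prime of $S$ lying over $\mathfrak{p}$, but it cannot force that prime to sit inside the prescribed $\mathfrak{q}'$. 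The damage propagates to your last step: in the example $H^0_{\mathfrak{m}}(S)\supseteq 0\times k\neq 0$ although $d=1$, so the injection $H^i_{\mathfrak{m}}(R)\hookrightarrow H^i_{\mathfrak{m}}(S)$ yields nothing for $i<d$.

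Your overall strategy---inject a depth-detecting invariant of $R$ into the corresponding one for $S$ via Proposition \ref{PropMel}, then kill the target using Cohen--Macaulayness of $S$---is exactly the paper's, which works with Koszul homology of a system of parameters rather than local cohomology; the paper simply asserts that $\underline{x}$ is a system of parameters of $S_Q$ for every maximal ideal $Q$, which is precisely the equidimensionality you (rightly) felt needed justification. The point cannot be obtained from purity via going-down. One route to a repair is to first discard the low-dimensional factors of $S$: writing $S=\prod_i S_i$ according to the idempotents decomposing $\Spec(S)$ into connected components and letting $\sigma:S\to R$ be an $R$-linear splitting, the identity $1=\sum_i\sigma(e_i)$ forces some $\sigma(e_i)$ to be a unit $u$ of the local ring $R$, and then $u^{-1}\sigma$ restricted to $S_i$ splits $R\to S_i$; one then runs the dimension count on a connected, Cohen--Macaulay (hence catenary and locally equidimensional) factor, where it has a chance of succeeding. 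As written, however, the proof does not go through.
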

\begin{proof}
Let $P\subset R$ be a prime ideal. Let $\underline{x}=x_1,\ldots, x_d$ denote a system of parameters of $R_P$, where $d=\dim(R_P)$.
It suffices to show that $H_i(\cK(\underline{x};R_P))=0$ for $i\neq 0$,  where $\cK$ is the
Koszul complex with respect to $\underline{x}.$
We notice that the natural inclusion $R_P\to S_P$ is a pure homeomorphism of rings.
This induces an injective morphism of $R$-modules
$H_i(\cK(\underline{x};R_P))\to H_i(\cK(\underline{x};S_P))$ by Proposition \ref{PropMel}. Thus, it is enough to show that $H_i(\cK(\underline{x};S_P))=0$ for $i\neq 0$.
Since $S_P$ is a module finite extension of $R_P$, we have that  every maximal ideal $Q\subset S_P$ contracts to $PR_P$ and 
$\underline{x}$ is a system of parameters for $S_Q$. Then, $H_i(\cK(\underline{x};S_Q))=0$ for $i\neq 0$ and every maximal ideal $Q\subset S_P$.
Hence, $H_i(\cK(\underline{x};S_P))=0$ for $i\neq 0$ and the result follows.

\end{proof}

\begin{Prop}\label{PropBass} 
Let $R\to S$ be a homomorphism of  Noetherian rings that splits. 
Assume that $S$ is a Cohen-Macaulay ring and  $S$ is finitely generated as an $R$-module.
Let $N$ be an $R$-module and $M$ be an $S$-module. Let $N\to M$ be a morphism of $R$-modules that splits. 
If all the Bass numbers of $M$, as an $S$-module, are finite, 
then all the Bass numbers of $N$, as an $R$-module, are finite.
\end{Prop}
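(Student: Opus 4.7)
The plan is first to reduce to showing that $M_P$ has finite Bass numbers as an $R_P$-module for every prime $P\subset R$. Localizing the $R$-linear splitting $N\to M$ at $P$ makes $N_P$ a direct summand of $M_P$, so $\Ext^j_{R_P}(k(P), N_P)$ is a direct summand of $\Ext^j_{R_P}(k(P), M_P)$, and the reduction is immediate. Since any split map is pure, Lemma~\ref{LemmaHyp} yields that $R$ is itself Cohen-Macaulay, a fact I will use below.

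Fix $P\subset R$ prime and a system of parameters $\underline{x}=x_1,\dots,x_d$ of $R_P$; this is automatically an $R_P$-regular sequence. By Lemma~\ref{LemmaLength} applied to the finite length $R_P$-module $R_P/(\underline{x})$, it is enough to prove
\[
\Length_{R_P}\!\bigl(\Ext^j_{R_P}(R_P/(\underline{x}), M_P)\bigr)<\infty \quad \text{for all } j\geq 0.
\]

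The key step will be a Koszul identification that pushes this computation over to $S$. Because $S$ is module-finite over $R$, the ring $S_P$ is semi-local, with maximal ideals corresponding to the finitely many primes $Q_1,\dots,Q_t\subset S$ lying over $P$. The quotient $S_P/\underline{x}S_P$ is finite over the Artinian ring $R_P/(\underline{x})$, so $\underline{x}$ is a system of parameters for every local ring $S_{Q_i}$; since each $S_{Q_i}$ is Cohen-Macaulay, $\underline{x}$ is $S_{Q_i}$-regular, hence $S_P$-regular. Both Koszul complexes $\cK(\underline{x};R_P)$ and $\cK(\underline{x};S_P)$ are then free resolutions of the corresponding cyclic quotients, and applying $\Hom(-,M_P)$ to either one produces the same Koszul cochain complex on $M_P$, because multiplication by $x_i$ on $M_P$ is the same map whether $x_i$ is viewed in $R_P$ or in $S_P$. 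This gives a natural isomorphism
\[
\Ext^j_{R_P}(R_P/(\underline{x}), M_P)\;\cong\;\Ext^j_{S_P}(S_P/(\underline{x}), M_P).
\]

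To conclude, I will use that $S_P/(\underline{x})$ has finite length over $S_P$, so it decomposes as $\prod_i S_{Q_i}/(\underline{x})S_{Q_i}$; correspondingly, the right-hand side above splits as a finite direct sum of the terms $\Ext^j_{S_{Q_i}}(S_{Q_i}/(\underline{x}), M_{Q_i})$. The hypothesis that the Bass numbers of $M$ over $S$ are finite at each $Q_i$, combined with Lemma~\ref{LemmaLength} applied at the local ring $S_{Q_i}$, forces every summand to have finite length. Hence the left-hand side has finite length over $S_P$, and therefore over $R_P$ as $S_P$ is module-finite over $R_P$. A final application of Lemma~\ref{LemmaLength} at $R_P$ yields finiteness of the Bass numbers of $M_P$, and the summand reduction from the first paragraph closes the argument. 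The one step I anticipate needing the most care is verifying that the two Koszul cochain complexes really give the same computation of Ext across the two rings, but this should reduce to the unambiguous action of $\underline{x}$ on $M_P$.
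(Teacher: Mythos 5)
Your proposal is correct and follows essentially the same route as the paper's own proof: reduce to $M$ via the split injection, localize at $P$, use Lemma \ref{LemmaHyp} to get Cohen-Macaulayness of $R$, identify $\Ext^j_{R_P}(R_P/(\underline{x}),M_P)$ with $\Ext^j_{S_P}(S_P/(\underline{x}),M_P)$ through the two Koszul resolutions, and pass back and forth with Lemma \ref{LemmaLength}. The step you flagged as delicate is handled the same way in the paper (the two $\Hom$ complexes coincide because the action of $\underline{x}$ on $M_P$ is unambiguous), so no changes are needed.
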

\begin{proof}

Since $N\hookrightarrow  M$ splits, we have that $\Ext^i_{R_P} (R_P/PR_P,N_P)$ 
is a direct summand of $\Ext^i_{R_P} (R_P/PR_P,M_P)$,
so, we may assume that $N=M$.

Let $P$ be a fixed prime ideal of $R$ and let $K_P$ denote $R_P/PR_P$. Since we want to show that
$\Dim_{K_P}(\Ext^i_{R_P} (K_P, M_P))$ is finite, we may assume without loss of generality that $R$ is local and $P$ is
its maximal ideal. Let $\underline{x}=x_1,\dots,x_n$ be a system of parameters for $R$. 
Since $R$ is Cohen-Macaulay by Lemma \ref{LemmaHyp}, we have that the 
Koszul complex, $\cK_R(\underline{x})$, 
is a free resolution for $R/I$, where $I=(x_1,\ldots,x_n)$. We also have that for every maximal ideal 
$Q\subset S$ lying over $P$, $\underline{x}$ is a system of  parameters of $S_Q$ because $\Dim R=\Dim S_Q$ and $S_Q/IS_Q$ 
is a zero dimensional ring. 
From the Cohen-Macaulayness of $S$ and the previous fact, we have that the Koszul complex $\cK_S (\underline{x})$ is a free
resolution for $S/IS$. Therefore, 
$\Ext^i_R (R/I, M)=H^i(\Hom_R(\cK_R (\underline{x}), M))=H^i(\Hom_S(\cK_S (\underline{x}),M))=\Ext^i_S (S/IS, M).$
Since $\Ext^i_S (S/IS, M)=\oplus_Q \Ext^i_{S_Q} (S_Q/IS_Q, M_Q)$ has finite length as an $S$-module by Lemma \ref{LemmaLength}, we have that 
$\Ext^i_R (R/I, M)$ has finite length as an $R$-module because $S$ is finitely generated.
Then, we have that $\Dim_{K_P} (\Ext^i_R (K_P, M))$ is finite by Lemma \ref{LemmaLength}.\end{proof}

\begin{proof}[Proof of Theorem \ref{MainThm2}]
The splitting between $R$ and $S$ induces a splitting between
 $\cT (R) \hookrightarrow \cT (S)$.  
The rest follows from  Proposition \ref{PropBass}.
\end{proof}

\section*{Acknowledgments}
I would like to thank my advisor Mel Hochster for his valuable comments and suggestions. 
I also wish to thank Juan Felipe Perez-Vallejo for carefully reading 
this manuscript. 
I am grateful to the
referee for her or his comments.
Thanks are also due to the National Council of Science and Technology of Mexico by its support through grant $210916.$

{\sc Department of Mathematics, University of Michigan, Ann Arbor, MI $48109$--$1043$, USA.}\\
{\it Email address:}  \texttt{luisnub@umich.edu}
\end{document}